 \newtheorem{thm}{Theorem}[section]
 \newtheorem{prop}[thm]{Proposition}
 \newtheorem{example}[thm]{Example}
\newcommand{\norm}[1]{\left|\!\left|{#1}\right|\!\right|}
 \newcommand{\R}{\ensuremath{\mathbb{R}}}
 \newcommand\tlide[1]{\tilde{#1}}
\newcommand\Id{\operatorname{Id}}
\title[Sharp families of  eigenfunctions and quasimodes]{A note on constructing families of sharp examples for $L^{p}$ growth of eigenfunctions and quasimodes}
\author{Melissa Tacy}
\email{mtacy@maths.otago.ac.nz}
\address{School of Mathematics and Statistics, University of Otago, Otago, 9016 New Zealand}
\begin{document}

  \begin{abstract}
In this note we analyse $L^{p}$ estimates for Laplacian eigenfunctions and quasimodes and their associated sharp examples. In particular we use previously determined estimates to produce a new set of estimates for restriction to thickened neighbourhoods of submanifolds. In addition we produce a family flat model quasimode examples that can be used to determine sharpness of estimates on Laplacian eigenfunctions restricted to subsets. For each quasimode in the family we show that there is a corresponding spherical harmonic that displays the same growth properties. Therefore it is enough to check $L^{p}$ growth  estimates against the  simple flat model examples. Finally we present a heuristic that for any subset determines which quasimode in the family is expected to produce sharp examples. 
 \end{abstract}
\maketitle

Let $(M,g)$ be a Riemannian manifold and $\Delta=\Delta_{g}$ be the (positive) Laplace-Beltrami operator defined by the metric. There has been much recent interest (for example \cite{BGT},\cite{Chen14} ,\cite{HTacy}, \cite{koch}, \cite{S88},\cite{soggesurvey},\cite{tacy09}) in understanding how the $L^{p}$ norms of Laplacian eigenfunctions
$$\Delta{}u=\lambda^{2}u$$
grow for large $\lambda$. In particular in comparing  the $L^{p}$ estimates over the full manifold with that on subsets. The results in this area produce estimates of the form
$$\norm{u}_{L^{p}(X)}\lesssim{}\lambda^{\delta(n,p,X)}\norm{u}_{L^{2}(M)}$$
where $X$ is a subset of $M$ (not necessarily of full dimension). Is is often instructive to translate to a semiclassical problem where $\lambda^{-1}=h$ and $u$ is a solution to the semiclassical equation $(h^{2}\Delta-1)u$. In fact, for a number of technical reasons, it is more usual to consider approximate solutions, that is $u$ such that
$$\norm{(h^{2}\Delta-1)u}_{L^{2}(M)}\lesssim{}h\norm{u}_{L^{2}(M)}.$$

The purpose of this note is twofold
\begin{enumerate}
\item To examine the known estimates and associated sharp examples and obtain new sharp estimates by ``cheap'' techniques (such as the application of H\"{o}lder's inequality or interpolation).
\item    To describe how to construct families of examples to examine questions of sharpness both for the flat model cases and for spherical harmonics. 
\end{enumerate}
In particular we will obtain $L^{p}$ estimates where $X$ is a thickened region of a submanifold. The examples we construct will show that these estimates are sharp (up to a possible $\log$ loss). The spherical harmonic examples have the advantage of being exact eigenfunctions however they are not so easy to write down explicitly. The flat model examples are in contrast very easy to explicitly produce. The flat model also has the advantage that for any given $p$ (with knowledge of the semiclassical version of the $L^{p}$ estimate proof) it is easy to determine which functions in the family will give rise to sharp examples.  We will show that every flat model example has a matching spherical harmonic which shares all relevant features. Therefore any result that is sharp under the flat model is sharp under spherical harmonics. Finally we discuss how, given any particular $p$, one predicts which example will give rise to sharp estimates. 

The whole and submanifold estimates are as follows
$$\norm{u}_{L^{p}(X)}\lesssim{}h^{-\delta(n,k,p)}\norm{u}_{L^{2}(M)}$$
for $X$ a $k$ dimensional smooth submanifold if $k<n$ and for $X=M$ if $k=n$. The function $\delta(n,k,p)$ is given by
$$\delta(n,n,p)=\begin{cases}
\frac{n-1}{2}-\frac{n}{p}&\frac{2(n+1)}{n-1}\leq{}p\leq{}\infty\\
\frac{n-1}{4}-\frac{n-1}{2p}&2\leq{}p\leq{}\frac{2(n+1)}{n-1}\end{cases}$$
$$\delta(n,n-1,p)=\begin{cases}
\frac{n-1}{2}-\frac{n-1}{p}&\frac{2n}{n-1}\leq{}p\leq{}\infty\\
\frac{n-1}{4}-\frac{n-2}{2p}&2\leq{}p\leq{}\frac{2n}{n-1}\end{cases}$$
and for $k\leq{}n-2$
$$\delta(n,k,p)=\frac{n-1}{2}+\frac{k}{p}\quad{}2<p<\infty.$$
In the case $k\leq{}n-3$ or $n=3,k=2$ the $p=2$ estimate is included, otherwise there is a logarithmic loss
$$\norm{u}_{L^{2}(X)}\lesssim{}h^{-\frac{n-1+k}{2}}\log|h|\norm{u}_{L^{2}}.$$
These estimates are due to
\begin{itemize}
\item Sogge \cite{S88} for $L^{p}$ estimates over the full manifold and Koch-Tataru-Zworski \cite{koch} for the semiclassical problem.
\item Burq-G\'{e}rard-Tzevtkov \cite{BGT} for $L^{p}$ estimates of eigenfunctions on submanifolds and Tacy \cite{tacy09} for the semiclassical problem.
\item Chen-Sogge \cite{Chen14} for the endpoint estimate $(n,k,p)=(3,2,2)$.
\end{itemize}
It is well known that these estimates are saturated for high $p$ by the zonal harmonics and for low $p$ by the highest weight harmonics. The key features that saturate the estimates are a point concentration and a tube concentration. Zonal harmonics have a point concentration at their north pole while highest weight harmonics are highly concentrated in a $h^{\frac{1}{2}}$ width tube around a great circle.

\begin{figure}[h!]\label{pointfig}
\includegraphics[scale=0.4]{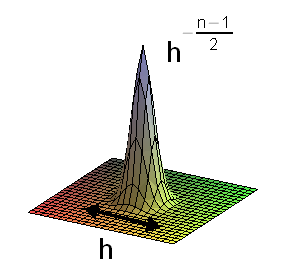}
\caption{Concentration at a point}
\end{figure}

\begin{figure}[h!]\label{tubefig}
\includegraphics[scale=0.4]{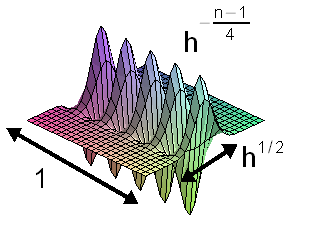}
\caption{Concentration in a tube}
\end{figure}

These two examples alone are sometimes enough to analyse sharp $L^{p}$ behaviour. We demonstrate this for restriction of eigenfunctions to sets near submanifolds. 
For $\Sigma$ a smooth $k$ dimensional submanifold of $M$ let $\Sigma_{\beta}$ be the set
$$\Sigma_{\beta}=\{x\in{}M\mid{}d(x,\Sigma)\leq{}h^{\beta}\}$$
where $d$ is the usual distance associated with the metric $g$. We want an estimate of the form
$$\norm{u}_{L^{p}(\Sigma_{\beta})}\lesssim{}h^{-\sigma(n,k,p,\beta)}\norm{u}_{L^{2}(M)}$$
for $u$ an Laplacian eigenfunction or of quasimode of $h^{2}\Delta_{g}-1$. We first  make some observations using prior results and the point/tube examples. These observations will enable us to determine $\sigma(n,k,p,\beta)$ in many cases.
\begin{itemize}
\item[Observation 1] Clearly the $L^{p}$ norm of $u$ on $\Sigma_{\beta}$ must be bounded by the $L^{p}$ norm of $u$ on $M$. Therefore for all $p$ we have
\begin{equation}\norm{u}_{L^{p}(\Sigma_{\beta})}\lesssim{}h^{-\delta(n,n,p)}\norm{u}_{L^{2}(M)}.\label{trivfull}\end{equation}
The question is then whether this can be improved. We must therefore first ask whether the known point and tube features fit inside $\Sigma_{\beta}$. If they do we can expect no better estimates that \eqref{trivfull}.
\item[Observation 2] Since both the tube and point features can be placed inside $\Sigma_{\beta}$ where $\beta\leq{}\frac{1}{2}$ we know immediately that there can be no better estimates in this case.
\item[Observation 3] Writing $x\in{}M$ as $x=(y,z)$ where $\Sigma=\{(y,z)\in{}M\mid{}z=0\}$ we see that
$$\int_{\Sigma_{\beta}}|u|^{p}dx\leq{}\sup_{|z|\leq{}ch^{\beta}}\int{}|u|^{p}dy\times\int_{|z|\leq{}ch^{\beta}}dz$$
$$\lesssim{}h^{-p\delta(n,k,p)}h^{\beta(n-k)}.$$
So we may also say that
\begin{equation}\norm{u}_{L^{p}(\Sigma_{\beta})}\lesssim{}h^{-\delta(n,k,p)+\frac{\beta(n-k)}{p}}\norm{u}_{L^{2}(M)}.\label{trivsub}\end{equation}
If for all $|z|\leq{}h^{\beta}$ the submanifold estimate is sharp we cannot expect to do better than \eqref{trivsub}.
\item[Observation 4] Eigenfunctions and quasimodes have the property that they oscillate with frequency on the order $h^{-1}$, therefore the cannot change much in a region of size $h$. This means that the estimate of \eqref{trivsub} is the best we may expect for $\beta\geq{}1$ and in fact we see that this is indeed the cases for both the point and tube sharp examples.
\end{itemize}  

From these four observations (along with interpolation from known results) we, in Section 1, generate a full set of $L^{p}(\Sigma_{\beta})$  In Section \ref{flatexample} we construct a family of sharp quasimodes examples in the flat model case that prove these $L^{p}$ estimates to be sharp.  In Section \ref{quastoeig} we show that on the sphere we can construct exact eigenfunctions with the same properties as the sharp quasimode examples which means that, in any situation, we may check estimates against the flat model. In Section \ref{pickexample} we discuss how, given knowledge of the semiclassical techniques employed to prove the whole and submanifold estimates, one chooses the correct example to get a sharp quasimode.

 \subsection*{Acknowledgements} The author acknowledges the comments and suggestions of the reviewer that greatly improved the exposition of this paper.

\section{$L^{p}$ estimates on $\Sigma_{\beta}$}\label{Lpest}

In this section we use our four observations along with known results to prove a full range of $L^{p}$ estimates for $\Sigma_{\beta}$. Taken together Observations 2 and 4 tell us that there are no non-trivial estimates outside $\frac{1}{2}\leq{}\beta\leq{}1$ so we focus on this region. In the case that $\Sigma$ is a hypersurface we obtain the following bounds
\begin{thm}\label{hypersurface:thm} Suppose $u$ is an $O_{L^{2}}(h)$ quasimode of $h^{2}\Delta-1$ on a Riemannian manifold $(M,g)$.  Further for $\Sigma$ a smooth embedded hypersurface in $M$ and $\frac{1}{2}\leq{}\beta\leq{}1$, let $\Sigma_{\beta}=\{x\in{}M\mid{}d(x,\Sigma)\leq{}h^{\beta}\}$. Then
$$\norm{u}_{L^{p}(\Sigma_{\beta})}\lesssim{}h^{-\sigma(n,n-1,p)}\norm{u}_{L^{2}(M)}$$
where
$$\sigma(n,n-1,p,\beta)=\begin{cases}
\delta(n,n,p)&p\geq{}\frac{2(n+1)}{n-1}\\
\frac{\beta(n-1)}{2}-\frac{\beta(n+1)}{p}+\frac{1}{p}&\frac{2n}{n-1}\leq{}p\leq{}\frac{2(n+1)}{n-1}\\
\delta(n,n-1,p)-\frac{\beta}{p}&2\leq{}p\leq{}\frac{2n}{n-1}.\end{cases}$$
\end{thm}

\begin{proof}

From Observation 1 we know that $u$ must obey the full manifold estimates. Since $\beta\geq 1$ we may always fit the point type example into $\Sigma_{\beta}$ so we cannot expect better estimates than those arising from a point concentration. So we know that if $p\geq{}\frac{2(n+1)}{n-1}$ we cannot expect better estimates than those from over the full manifold. That is
$$\sigma(n,n-1,p,\beta)=\delta(n,n,p)\quad{}p\geq{}\frac{2(n+1)}{n-1}.$$
The sharp example for the low $p$ (that is $2\leq{}p\leq{}\frac{2n}{n-1}$) hypersurface estimates is the tube oriented with its long direction along the hypersurface. Since this examples has relatively constant size in a $1\times{}h^{\frac{n-1}{2}}$ region the estimates of Observation 3 are the best we could expect in this range of $p$. That is
$$\sigma(n,n-1,p,\beta)=\delta(n,n-1,p)-\frac{\beta}{p}\quad{}2\leq{}p\leq{}\frac{2n}{n-1}.$$
 Therefore the only unknown estimates are those between $\frac{2n}{n-1}$ and $\frac{2(n+1)}{n-1}$. We interpolate between the estimate for $p\frac{2(n+1)}{n-1}$ and $p=\frac{2n}{n-1}$ to obtain $$\sigma(n,n-1,p,\beta)=\begin{cases}
\delta(n,n,p)&p\geq{}\frac{2(n+1)}{n-1}\\
\frac{\beta(n-1)}{2}-\frac{\beta(n+1)}{p}+\frac{1}{p}&\frac{2n}{n-1}\leq{}p\leq{}\frac{2(n+1)}{n-1}\\
\delta(n,n-1,p)-\frac{\beta}{p}&2\leq{}p\leq{}\frac{2n}{n-1}\end{cases}$$
Since Observations 1 and 3 (along with the known sharp examples for manifolds and hypersurfaces) tell us that we have sharp examples for $p\geq{}\frac{2(n+1)}{n-1}$ and $p\leq{}\frac{2n}{n-1}$ the only question remaining is whether the intermediate bounds obtained through interpolation are sharp. In Section \ref{flatexample} we will construct model quasimodes that demonstrate sharpness. The results of Section \ref{quastoeig} guarantee that there are exact eigenfunctions on the sphere that are also sharp.

\end{proof}

Where $\Sigma$ is a lower dimensional submanifold we obtain.

\begin{thm}\label{lower:thm} Suppose $u$ is an $O_{L^{2}}(h)$ quasimode of $h^{2}\Delta-1$ on a Riemannian manifold $(M,g)$.  Further for $\Sigma$ a smooth embedded submanifold of dimension $k\leq{}n-3$ in $M$ and $\frac{1}{2}\leq{}\beta\leq{}1$, let $\Sigma_{\beta}=\{x\in{}M\mid{}d(x,\Sigma)\leq{}h^{\beta}\}$. Then
$$\norm{u}_{L^{p}(\Sigma_{\beta})}\lesssim{}h^{-\sigma(n,k,p)}\norm{u}_{L^{2}(M)}$$
where
$$\sigma(n,k,p,\beta)=\begin{cases}
\delta(n,n,p)&p\geq{}\frac{2(n+1)}{n-1}\\
\frac{\beta(n-1)}{2}-\frac{\beta(n-1)}{p}+\frac{1}{p}&2\leq{}p\leq{}\frac{2(n+1)}{n-1}.\end{cases}$$
If $k=n-2$ the same result holds for $p\geq{}\frac{2(n+1)}{n-1}$ and holds with a $\log$ loss when $p<\frac{2(n+1)}{n-1}$.
\end{thm}

\begin{proof}
Again Observation 1 along with the point sharp example tells us that if $p\geq{}\frac{2(n+1)}{n-1}$ we cannot expect better estimates than those from the full manifold. Therefore 
$$\sigma(n,k,p,\beta)=\delta(n,n,p)\quad{}p\geq{}\frac{2(n+1)}{n-1}$$
The sharp submanifold restriction examples however are the point type eigenfunctions. This feature only persists for an $O(h)$ region so when $\beta\ll 1$ we cannot expect to get sharp examples for low $p$ from Observation 3. However Burq and Zuily \cite{Burq2016} have, in the case $k\leq{}n-3$, obtained
$$\norm{u}_{L^{2}(\Sigma_{\beta})}\lesssim{}h^{\beta-\frac{1}{2}}\norm{u}_{L^{2}(M)}$$
and that when $k=n-2$ the same result holds with a $\log$ loss.
so we may interpolate from this point to obtain
$$\sigma(n,k,p,\beta)=\begin{cases}
\delta(n,n,p)&p\geq{}\frac{2(n+1)}{n-1}\\
\frac{\beta(n-1)}{2}-\frac{\beta(n-1)}{p}+\frac{1}{p}&2\leq{}p\leq{}\frac{2(n+1)}{n-1}\end{cases}$$
 We know that the high $p$ estimates are sharp. In Sections \ref{flatexample} and \ref{quastoeig} we show that the low  $p$ estimates (modulo the log loss) are also sharp.
\end{proof}

\section{Flat model examples}\label{flatexample}

 We study the flat model, that is, localised quasimodes of the Laplacian in $\R^n$, to gain insight into sharp examples. Such quasimodes can be produced on the Fourier side easily. In keeping with the semiclassical theme we use the re-scaled semiclassical Fourier transform,
$$\mathcal{F}_{h}[u](\xi)=\frac{1}{(2\pi{}h)^{n/2}}\int_{\R^n}e^{-\frac{i}{h}\langle{}x,\xi\rangle}u(x)\,dx.$$ 
This operator has the property that
$$\mathcal{F}_{h}\left[hD_{x_{i}}\right]={}\xi_{i}\mathcal{F}_{h}[u]$$
and 
$$\norm{\mathcal{F}_{h}[u]}_{L^{2}}=\norm{u}_{L^{2}}.$$
The development of flat model examples was discussed in \cite{Guo15}. We include it here for the readers convenience.

Suppose that $u$ is an $L^{2}$ normalised $O_{L^2}(h)$ quasimode of $\Delta_{\R^n}$. We must have
$$\norm{(|\xi|^{2}-1)\mathcal{F}_{h}[u]}_{L^{2}(\R^{n})}\lesssim{}h,$$
Thus $\mathcal{F}_{h}[u]$ must be located near the sphere of radius $1$ in the $\xi$-variables. We create a family of quaismodes indexed by $\alpha$ which controls the degree of angular dispersion of $\xi$. Write $\xi=(r,\omega)$ where $\omega\in{}S^{n-1}$ and set the coordinate system so that $\omega_{0}$ corresponds with the unit vector in the $\xi_{1}$ direction. Let
$$\chi_{\alpha}^{h}(r,\omega)=\begin{cases}
1 & \text{if }|r-1|<h,|\omega-\omega_{0}|<h^{\alpha},\\
0 & \mbox{otherwise}.\end{cases}$$
Then set
$$f^h_{\alpha}(\xi)=f^{h}_{\alpha}(r,\omega)=h^{-1/2-\alpha(n-1)/2}\chi(r,\omega).$$
Note that $f^{h}_{\alpha}$ is $L^{2}$ normalised. Now set
$$T^h_{\alpha}(x)=\mathcal{F}_{h}^{-1}[f^h_{\alpha}](x)=\frac{1}{(2\pi{}h)^{n/2}}\int_{\R^n}e^{\frac{i}{h}\langle{}x,\xi\rangle}f_{\alpha}(\xi)\,d\xi.$$
$T^h_{\alpha}$ is an $L^{2}$ normalised $O(h)$ quasimode of $\Delta_{\R^n}$. We may write
$$T^h_{\alpha}(x)=\frac{h^{-1/2-\alpha(n-1)/2-n/2}e^{\frac{i}{h}x_{1}}}{(2\pi)^{n/2}}\int_{\R^n}e^{\frac{i}{h}(x_{1}(\xi_{1}-1)+\langle{}x',\xi'\rangle)}\chi_{\alpha}(\xi)\,d\xi.$$
Note that if $|x_{1}|<\epsilon{}h^{1-2\alpha}$ and $|x'|<\epsilon{}h^{1-\alpha}$ for sufficiently small $\epsilon>0$, the factor
$$e^{\frac{i}{h}(x_{1}(\xi_{1}-1)+\langle{}x',\xi'\rangle)}$$
does not oscillate so in this region
$$|T^h_{\alpha}(x)|>ch^{-(n-1)/2+\alpha(n-1)/2}.$$
\begin{figure}[h!]\label{Talphafig}
\includegraphics[scale=0.3]{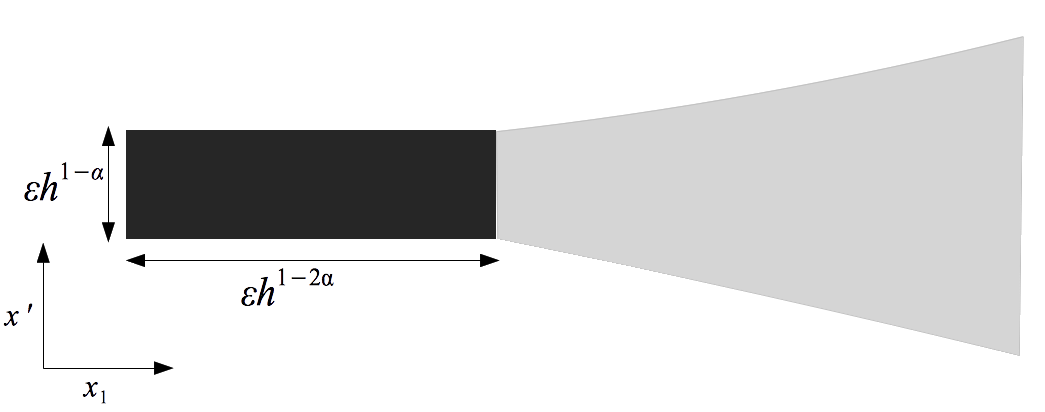}
\caption{$T^{h}_{\alpha}$ is localised so that is large in a $h^{1-2\alpha}\times{}(h^{1-\alpha})^{n-1}$ tube}
\end{figure}
We claim that when $\alpha=1-\beta$ the function $T_{\alpha}$ saturates the $L^{p}$ estimates for $\Sigma_{\beta}$ in the case where $\Sigma$ is a hypersurface and $\frac{2n}{n-1}\leq{}p\leq\frac{2(n+1)}{n-1}$ as well as the case where $\Sigma$ is a lower dimensional submanifold and $p\leq{}\frac{2(n+1)}{n-1}$.

\begin{example}\label{example}
We choose coordinates such that when we write $x\in{}M$ as $x=(y,z)$, $\Sigma=\{(y,z)\in{}M\mid{}z=0\}$. By setting $\alpha=1-\beta$ we produce a function that has a $h^{2\beta-1}\times{}h^{\beta}$ tube where 
$$|T_{1-\beta}|\geq{}ch^{-\frac{\beta(n-1)}{2}}$$
Rotations and translations of $T_{\alpha}$ are still quasimodes so we may align it so that the long direction lies in the submanifold. Therefore
we have
$$\norm{T_{\alpha}}_{L^{p}(\Sigma_{\beta})}>ch^{-\frac{\beta(n-1)}{2}}h^{\frac{\beta(n-1)}{p}+\frac{2\beta-1}{p}}$$
$$=ch^{-\frac{\beta(n-1)}{2}+\frac{\beta(n+1)}{p}-\frac{1}{p}}$$
as required.

\end{example} 

One could obtain this example by calculating lower bounds for the $L^{p}$ norm for every $\alpha$ and then maximising. However by understanding the heuristics of the semiclassical proof one can immediately select the correct scale to find sharp examples in any situation. We discuss this heuristic in Section \ref{pickexample}.

\section{From quasimodes to exact eigenfunction}\label{quastoeig}

While they are easy to work with the quasimodes $T_{\alpha}$ only show us that estimates are sharp for quasimodes of the flat Laplacian. However we can construct exact $L^{2}$ normalised eigenfunctions $\phi_{\alpha}$, on the sphere  that have all the relevant  properties of $T_{\alpha}$. That is they have a $h^{1-2\alpha}\times{}h^{(1-\alpha)(n-1)}$ region where $|\phi_{\alpha}|\geq{}ch^{-\frac{n-1}{2}+\frac{\alpha(n-1)}{2}}$. Since this is the only property of $T_{\alpha}$ used to prove sharp examples this construction shows that any sharp examples from $T_{\alpha}$ give rise to sharp examples of exact eigenfunctions on the sphere. So any quasimode estimates that are sharp for the family of flat quasimode examples $T_{\alpha}$ are also sharp (with exact eigenfunctions) on the sphere. 

To understand which spherical harmonics to pick we first re-express $T_{\alpha}$ as a sum of quasimodes, each of which has a  Fourier transform localised in the angular variables on the scale of $h^{1/2}$.  This is the localisation scale of $T_{1/2}$. Note that $T_{1/2}$ is localised about the point $(1,0,\dots,0)$. We can produce a function $T^{j}_{1/2}$ with Fourier support in a $h\times{}h^{\frac{n-1}{2}}$ region of any $\xi_{j}\in S^{n-1}$ by a rotation applied to $\mathcal{F}_{h}(T_{1/2})$. The quasimode produced by this rotation  is simply the standard $T_{1/2}$ quasimode rotated so that the long axis lies along $\xi_{j}$. Now $f_{\alpha}^{h}(\xi)$ is supported in an $h^{\alpha(n-1)}$ angular region so we can cover this support with $h^{\left(\alpha-\frac{1}{2}\right)(n-1)}$ rotations of $\mathcal{F}_{h}[T_{1/2}]$. Therefore $T_{\alpha}$ can be though of as a sum of $h^{\left(\alpha-\frac{1}{2}\right)(n-1)}$ functions each of which is a rotation of $T_{1/2}$.

The flat Laplacian quasimodes $T_{1/2}$ resemble the tubular concentrations we see in highest weight spherical harmonics. This leads us to the idea that we can create suitable a $\phi_{\alpha}$ by considering a sum of rotated highest weight spherical harmonics. We write $S^{n}$ as the subset of $\R^{n+1}$ where $|x|=1$, it is well known that the function
$$u(x)=j^{\frac{n-1}{4}}(x_{1}+ix_{2})^{j}$$
is a solution to the spherical Laplacian eigenfunction equation with $j(j+n-1)=\lambda^{2}=h^{-2}$. Further if $x=(x_{1},x_{2},\bar{x})$ then
$$|u(x)|^{2}=j^{\frac{n-1}{2}}(1-|\bar{x}|^{2})^{j}=j^{\frac{n-1}{2}}e^{j\log(1-|\bar{x}|^{2})}$$
so $u(x)$ is highly concentrated on the equation $\bar{x}=0$ with exponential decay  when $|\bar{x}|\gg h^{1/2}$. The pre-factor of $j^{\frac{n-1}{4}}\approx h^{-\frac{n-1}{4}}$ ensures that $\norm{u}_{L^{2}}\approx{}1$. We produce an example by summing rotations of $u(x)$.

\begin{prop}\label{prop:eigen}
For any $\epsilon>0$ and $0\leq{}\alpha\leq{}1/2$, there exists a $\phi_{\alpha}$ such that $\Delta_{S^{n}}\phi_{\alpha}=j(j+n-1)\phi_{\alpha}$ and $\phi_{\alpha}$ is given by,
\begin{equation}\phi_{\alpha}(x)=h^{-\frac{\alpha(n-1)}{2}}\sum_{k=1}^{N_{\alpha}}(x_{1}+iP_{k}(x_{2},\dots{},x_{n}))^{j}\quad{}h^{-2}=j(j+n-1)\label{phialpha}\end{equation}
where  $N_{\alpha}=\tilde{\epsilon} h^{(\alpha-1/2)(n-1)}$  for some small but fixed $\tilde{\epsilon}$ dependent on $\epsilon$ and $P_{k}$ a linear polynomial whose coefficients $\alpha_{k}^{m}$ obey
\begin{enumerate}
\item $|1-\alpha^{2}_{k}|\leq{}{\epsilon}{}h^{2\alpha}$
\item $|\alpha^{m}_{k}|\leq{}{\epsilon}h^{\alpha}\quad{}m\neq{}2.$
\end{enumerate}
Further there are constants $c_{1}$ and $c_{2}$ so that
\begin{equation}c_{1}\leq{}\norm{\phi_{\alpha}}_{L^{2}}\leq{}c_{2}.\label{L2bounds}\end{equation}
\end{prop}

\begin{proof}
We construct $\phi_{\alpha}$ by taking rotations of the standard highest weight harmonic
$$u(x)=(ix_{1}+x_{2})^{j}.$$
For $j=3,\dots,n+1$ we allow the rotation numbers $s_{j}$ to take values in the set $\{h^{1/2}l\mid{}l=1,2,\dots,\lfloor \tilde{\epsilon} h^{\alpha-1/2}\rfloor \}$ (where $\tilde{\epsilon}$ is some small but fixed number). For each $s_{j}$ we define the associated rotation $R_{s_{j}}$ by 
$$(R_{s_{j}}(x))_{2}=\sqrt{1-s_{j}^{2}}x_{2}+s_{j}x_{j}$$
$$(R_{s_{j}}(x))_{j}=-s_{j}x_{2}+\sqrt{1-s_{j}^{2}}x_{j}$$
$$(R_{s_{j}}(x))_{m}=x_{m}\quad{}m\neq{}2,j.$$
Let
$$\phi_{\alpha}=h^{-\frac{\alpha(n-1)}{2}}\sum_{[s_{3},\dots,s_{n+1}]}u\circ{}R_{s_{n+1}}(x)\circ{}R_{s_{n}}\circ\cdots\circ{}R_{s_{3}}.$$
We claim that $\phi_{\alpha}$ has the necessary properties. Each individual term in the summand is an eigenfunction so clearly $\phi_{\alpha}$ is also an eigenfunction. Under the action of each rotation $R_{s_{j}}(x)$, $x_{1}$ is fixed so remains fixed under composition. Writing the $(n-1)$-tuple $S=(s_{3},\dots{}s_{n+1})$ and denoting
$$R_{S}=R_{s_{n+1}}(x)\circ{}R_{s_{n}}\circ\cdots\circ{}R_{s_{3}}$$
we see that
\begin{equation}(R_{S})_{2}=x_{2}\left(\prod_{j=3}^{n+1}\sqrt{1-s_{j}^{2}}\right)+\sum_{k=3}^{n+1}x_{k}s_{k}\left(\prod_{j=k+1}^{n+1}\sqrt{1-s_{j}^{2}}\right).\label{RS2}\end{equation}
Since each $s_{j}$ obeys $|s_{j}|\leq{}\tilde{\epsilon} h^{\alpha}$ by making $\tilde{\epsilon}$ suitably small we obtain the coefficient bounds
$$|1-\alpha^{2}_{k}|\leq{}\epsilon{}h^{2\alpha}$$
$$|\alpha^{m}_{k}|\leq{}\epsilon{}h^{\alpha}\quad{}m\neq{}2.$$
Therefore it remains only to prove the $L^{2}$ estimate. Note that there are $h^{(\alpha-1/2)(n-1)}$ terms in the summand each with $L^{2}$ norm of $h^{\frac{n-1}{4}}$ so \eqref{L2bounds} holds if for $S\neq{}S'$, $u\circ{}R_{S}$ and $u\circ{}R_{S'}$ are suitably orthogonal. We define
$$|S-S'|=\sup_{j}|s_{j}-s_{j}'|$$ 
and claim that for any $N>0$
$$\langle{}u\circ{}R_{S},u\circ{}R_{S'}\rangle\leq{}h^{-\frac{n-1}{2}}\left(1+\frac{|S-S'|}{h^{1/2}}\right)^{-N}.$$
Under  a change of variables
$$x\to{}R_{S'}^{-1}=R_{s'_{3},3}^{-1}\circ \cdots \circ R_{s'_{n+1},n+1}^{-1}$$ 
this reduces to showing that
\begin{equation}\left|\int\left(u\circ{}R_{S}\circ R_{S'}^{-1}(x)\right)u(x)d\mu(x)\right|\leq{}h^{-\frac{n-1}{2}}\left(1-\frac{|S-S'|}{h^{1/2}}\right)^{-N}.\label{orthintest}\end{equation}
From the arguments leading to \eqref{RS2} we can say that
$$u\circ{}R_{S}\circ{}R_{S}^{-1}=(x_{1}+iP_{S,S'}(x_{2},\dots{}x_{n+1}))^{j}$$
where $P_{S,S}$ is a linear polynomial in $x_{2},\dots{}x_{n+1}$. Let $k$ be such that $|s_{k}-s_{k}'|=|S-S'|$ and suppose that we have a lower bound on the $x_{k}$ coefficient, $\alpha^{k}(S,S')$, of
\begin{equation}|\alpha^{k}(S,S')|>c|S-S'|\quad\text{for some }c>0\label{coeffbound}\end{equation}
We will first assume \eqref{coeffbound} and use this to integrate by parts to show that $\eqref{coeffbound}\Rightarrow\eqref{orthintest}$, we then prove \eqref{coeffbound}.  Let $\theta=(\theta_{1},\dots{}\theta_{n})$ be a spherical coordinate system  so that $\theta_{n}\in[0,\pi]$ and the other $\theta_{i}\in[0,2\pi]$ and
$$x_{k}=\cos(\theta_{n})$$
$$x_{k+1}=\sin(\theta_{n})$$
$$\vdots$$
$$x_{2}=\sin(\theta_{n})\cdots{}\sin(\theta_{2})\sin(\theta_{1})$$
$$x_{1}=\sin(\theta_{n})\cdots{}\sin(\theta_{2})\cos(\theta_{1}).$$
Then
$$\frac{\partial (u\circ{}R_{S}\circ{}R_{S'}^{-1})}{\partial\theta_{n}}=j(x_{1}+iP_{S,S'}(x_{2},\cdots,x_{n})^{j-1}\left(F(\theta_{1},\theta_{n-1})\cos(\theta_{n})+i\alpha^{k}(S,S')\sin(\theta_{n})\right).$$
If we are suitable close to the region $\theta_{n}=\pi/2$ and \eqref{coeffbound} holds we have the lower bound
$$\left|F(\theta_{1},\theta_{n-1})\cos(\theta_{n})+i\alpha^{k}(S,S')\sin(\theta_{n})\right|\geq{}\frac{c}{2}|S-S'|$$
and can use this factor to integrate by parts. On the other hand away from the region $\theta_{n}=\pi/2$ we know that $u(\theta)$ decays exponentially so this contribution to the integral must be small. We complete the argument then by cutting the integral over $S^{n}$ into two pieces, one where we may integrate by parts, the other where exponential decay dominates. Let $\chi$ be a smooth cut off function supported in $|\tau|\leq{}2$ and equal to one in $|\tau|\leq{}1$. Consider first
$$\int_{S^{n}}\left(u\circ{}R_{S}\circ{}R_{S'}^{-1}(\theta)\right)u(\theta)\chi\left(\frac{\cos(\theta_{n})}{h^{1/4}|S-S'|^{1/2}}\right)d\mu(\theta)$$
On the support of $\chi$ we can write
$$(\sin(\theta_{n}\cdots{}\sin(\theta_{2})\cos(\theta_{1})+iP_{S,S'}(\theta))^{j}=\frac{1}{j|S-S'|}\frac{\partial }{\partial \theta_{n}}(\sin(\theta_{n}\cdots{}\sin(\theta_{2})\cos(\theta_{1})+iP_{S,S'}(\theta))^{j+1}G(\theta)$$ 
where $|G(\theta)|\leq{}1$. Therefore we can integrate by parts. Any time a derivative hits the cut off function or $u(\theta)$ we loose at worst a factor of $\max(h^{-1/2},h^{-1/4}|S-S'|^{1/2})$ so by repeating the argument $2N$ times we get
$$\left|\int_{S^{n}}\left(u\circ{}R_{S}\circ{}R_{S'}^{-1}(\theta)\right)u(\theta)\chi\left(\frac{|\cos(\theta_{n})}{h^{1/2}|S-S'|^{1/2}}\right)d\mu(\theta)\right|\leq{}\left(1+\frac{|S-S'|}{h^{1/2}}\right)^{-N}\int_{S^{n}}|u(x)|d\mu(x)$$
$$=h^{-\frac{n-1}{2}}\left(1+\frac{|S-S|}{h^{1/2}}\right)^{-N}.$$
Now consider
$$\int_{S^{n}}\left(u\circ{}R_{S}\circ{}R_{S'}^{-1}(\theta)\right)u(\theta)\left(1-\chi\left(\frac{|\cos(\theta_{n})}{h^{1/2}|S-S'|^{1/2}}\right)\right)d\mu(\theta)$$
On the support of $1-\chi$ we have $\cos(\theta_{n})>h^{1/4}|S-S'|^{1/2}$, so $x_{n}^{2}>h^{1/2}|S-S'|$ and
$$|u(x)|=e^{j\log(1-|\bar{x}|^{2})}\leq{}e^{-h^{-1/2}|S-S'|}.$$
So
\begin{multline*}
\left|\int_{S^{n}}\left(u\circ{}R_{S}\circ{}R_{S'}^{-1}(\theta)\right)u(\theta)(1-\chi\left(\frac{|\cos(\theta_{n})}{h^{1/2}|S-S'|^{1/2}}\right))d\mu(\theta)\right|\\
\leq{}e^{-h^{-1/2}|S-S'|}\int_{S^{n}}|u\circ{}R_{S}\circ{}R_{S'}^{-1}|d\mu(x)=h^{-\frac{n-1}{2}}e^{-h^{-1/2}|S-S'|}\end{multline*}
which is a much better estimate than we need.

Now it only remains to ascertain \eqref{coeffbound}. Since $u\circ R_{S}\circ R_{S}^{-1}=(x+ix_{2})^{j}$, $\alpha^{k}(S,S)=0$. Therefore if we expand as a series in $S'$ about $S$,
$$\alpha^{k}(S,S')=\sum_{i=3}^{n+1}\frac{\partial^{2}P_{S,S'}}{\partial s_{i}'\partial{x_{k}}}\Big|_{S=S'}(s_{i}-s_{i}')+O(|S-S'|^{2}).$$
If we write each rotation as a matrix $M_{s_{j}}$ then
$\frac{\partial P_{S,S'}}{\partial x_{k}}$ is given by the first element of
$$V(S,S')=M_{s_{n+1}}\times\cdots{}\times M_{s_{3}}\times M_{s'_{3}}^{-1}\times\cdots\times M_{s'_{n+1}}^{-1}e_{k}$$
where $e_{k}$ is the standard unit vector with $1$ in the entry corresponding to $x_{k}$. Now if $\partial_{s'_{i}}V(S,S')$ is the vector with elements given by  the partial derivative of the elements of $V(S,S')$ with respect to $s'_{i}$,
$$\partial_{s'_{i}}V(S,S')=M_{s_{n+1}}\times\cdots\times M_{s_{3}}\times M_{s'_{3}}^{-1}\times\cdots\times\partial_{s'_{i}}M_{s'_{i}}^{-1}\times\cdots\times M_{s'_{n+1}}^{-1}e_{k}$$
where $\partial_{s'_{i}}M_{s'_{i}}^{-1}$ is the matrix with elements given by the partial derivative of the elements of $M_{s'_{i}}^{-1}$ with respect to $s_{i}'$. So if we evaluate at $S=S'$
$$\partial_{s'_{i}}V(S,S')\Big|_{S=S'}=M_{s_{n+1}}\times\cdots\times M_{s_{i+1}}\times W_{s_{i}}\times M_{s_{i+1}}^{-1}\times \cdots \times M_{s_{n+1}}^{-1}e_{k}$$
where $W_{s_{i}}=M_{s_{i}}\times \partial_{s_{i}}M_{s_{i}}$. First consider the case $i=k$, if $j\neq{}k$ $M_{s_{j}}^{-1}e_{k}=e_{k}$  so
$$\partial_{s_{i}}V(S,S')\Big|_{S=S'}=M_{s_{n+1}}\circ{}M_{s_{k+1}}W_{s_{k}}e_{k}.$$
Since for any $\alpha$, $s_{i}^{2}<\tilde{\epsilon}^{2}$ we can say that
$$\sqrt{1-s_{i}^{2}}=1+O(\tilde{\epsilon})$$
so
$$W_{s_{i}}=\left[\begin{array}{c|ccc|c|ccc}
O(\tilde{\epsilon} )&0&\cdots&0&1+O(\tilde{\epsilon})&0&\cdots&0\\
\hline
0&0&\cdots& 0&0&0&\cdots& 0\\
\vdots&\vdots&&\vdots&\vdots&\vdots&&\vdots\\
0&0&\cdots& 0&0&0&\cdots& 0\\
\hline
-1+O(\tilde{\epsilon})&0&\cdots&0&O(\tilde{\epsilon})&0&\cdots&0\\
\hline
0&0&\cdots& 0&0&0&\cdots& 0\\
\vdots&\vdots&&\vdots&\vdots&\vdots&&\vdots\\
0&0&\cdots& 0&0&0&\cdots& 0\\\\\end{array}\right].$$

So $W_{s_{k}}e_{k}=(1+O(\tilde{\epsilon}),0,\dots,0,O(\tilde{\epsilon}),\dots{},0)$. From \eqref{RS2} we have seen  that multiplication of the matrices $M_{s_{j}}$ produces a matrix with upper left entry
 $\beta$, obeying $|1-\beta|\leq{}\epsilon{}h^{\alpha}$. So the first component of $\partial_{s'_{i}}V(S,S')\Big|_{S=S'}$ has a lower bound of $c>0$. Now consider the case when $i\neq{}k$. 
 We have $M_{s_{k}}e_{k}=(s_{k},0,\cdots{},0,\sqrt{1-s_{k}^{2}},\dots)$. Now the vector $(s_{k},0,\cdots,0)$ has norm bounded by $\tilde{\epsilon}$ and if $i\neq{}k$,  $W_{s_{i}}e_{k}=0$. Since each of the matrices $M_{s_{j}}$ and $W_{s_{i}}$ represent a bounded operator on $\R^{n-1}$ we can say that
 $$\left|\frac{\partial^{2}P_{S,S'}}{\partial s_{i}\partial x_{k}}\Big|_{S'=S}\right|\leq{}\tilde{\epsilon}.$$
 So by choosing $\tlide{\epsilon}$ small enough we have
$$\left|\sum_{i=3}^{n+1}\frac{\partial^{2}P_{S,S'}}{\partial s_{i}'\partial{x_{k}}}\Big|_{S=S'}(s_{i}-s_{i}')\right|\geq{}c|s_{k}-s_{k}'|=c|S-S'|.$$
So
$$|\alpha^{k}(S,S')|>c|S-S'|.$$
Therefore $u\circ{}R_{S}$ and $u\circ{}R_{S'}$ are suitably orthogonal and the $L^{2}$ estimates \eqref{L2bounds} hold.
\end{proof}

Having obtained our combination, $\phi_{\alpha}$ of highest weight harmonics it only remains to prove that there is indeed a $h^{1-2\alpha}\times{}h^{(1-\alpha)(n-1)}$ region where $\phi_{\alpha}$ is large enough.

\begin{prop}\label{linfinity}
Suppose $\phi_{\alpha}$ is given by \eqref{phialpha} . Then there is a $h^{1-2\alpha}\times{}h^{(1-\alpha)(n-1)}$ region in which
$|\phi_{\alpha}|>ch^{-\frac{n-1}{2}+\frac{\alpha(n-1)}{2}}$.
\end{prop}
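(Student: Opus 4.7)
The plan is to evaluate $\phi_{\alpha}$ near the point $e_{1}=(1,0,\dots,0)\in S^{n}$, which lies on every great circle that realizes the maximum modulus of an individual summand $(ix_{1}+P_{l,n})^{j}$. Because the inductive construction of Proposition \ref{inductprop} produces each coefficient vector $\vec{\alpha}_{l}:=(\alpha^{2}_{n,l},\dots,\alpha^{n+1}_{n,l})$ by successively applying orthogonal rotations to the unit vector $(1,0,\dots,0)\in\mathbb{R}^{n}$, each $\vec{\alpha}_{l}$ is itself a unit vector. I may therefore write $\vec{\alpha}_{l}=(\sqrt{1-|\vec{\beta}_{l}|^{2}},\,\vec{\beta}_{l})$ with $|\vec{\beta}_{l}|\lesssim h^{\alpha}$.

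Introduce polar coordinates on $S^{n}$ via $x_{1}=\cos\psi$, $(x_{2},\dots,x_{n+1})=\sin\psi\cdot\omega$ with $\omega\in S^{n-1}$, and decompose $\omega=(\sqrt{1-|\omega'|^{2}},\,\omega')$. The $l$-th summand becomes $z_{l}=i\cos\psi+\sin\psi\,(\vec{\alpha}_{l}\cdot\omega)$. Expanding $j\log z_{l}$ in $\psi$ gives
\[
j\log z_{l}=\tfrac{ij\pi}{2}-ij\psi+\tfrac{i}{2}j\psi\,d_{l}^{2}-\tfrac{1}{2}j\psi^{2}\,d_{l}^{2}+E_{l},
\]
where $d_{l}^{2}:=|\vec{\alpha}_{l}-\omega|^{2}=2-2\vec{\alpha}_{l}\cdot\omega\approx|\vec{\beta}_{l}-\omega'|^{2}$, so $d_{l}\lesssim h^{\alpha}+|\omega'|$, and $E_{l}$ collects higher-order terms. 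The first two terms are $l$-independent and factor out as a common phase $e^{ij\pi/2-ij\psi}$.

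Restricting to the region $|\psi|\le ch^{1-2\alpha}$ and $|\omega'|\le ch^{\alpha}$ for a small constant $c$, the $l$-dependent leading exponents satisfy $j\psi\,d_{l}^{2}\lesssim h^{-1}\cdot h^{1-2\alpha}\cdot h^{2\alpha}=1$ and $j\psi^{2}\,d_{l}^{2}\lesssim h^{1-2\alpha}\le 1$, both bounded. Hence every summand has modulus bounded below by a fixed positive constant and its $l$-dependent phase lies in a fixed interval. The elementary cosine estimate yields
\[
\left|\sum_{l=1}^{N_{n}}z_{l}^{j}\right|\gtrsim N_{n}=\epsilon h^{(\alpha-1/2)(n-1)},
\]
and multiplying by the prefactor $h^{-\alpha(n-1)/2}$ in \eqref{phialpha} gives $|\phi_{\alpha}|\gtrsim h^{-(n-1)/2+\alpha(n-1)/2}$. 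Converting back to Cartesian variables, $|x_{2}|\lesssim\sin\psi\lesssim h^{1-2\alpha}$ and $|(x_{3},\dots,x_{n+1})|=\sin\psi\cdot|\omega'|\lesssim h^{1-\alpha}$, which describes a region of the stated size $h^{1-2\alpha}\times(h^{1-\alpha})^{n-1}$.

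The main technical obstacle is verifying that $E_{l}$ is bounded uniformly in $l$ throughout the entire region. A careful expansion of $\arctan(\tan\psi\cdot\vec{\alpha}_{l}\cdot\omega)$ shows that the $l$-dependent cubic-in-$\psi$ correction carries a factor of $d_{l}^{2}$, so $j$ times it is $O(h^{2-4\alpha})$, bounded for $\alpha\in[0,1/2]$. Analogous factors of $d_{l}^{2}$ (or higher powers) appear in every $l$-dependent term of the full expansion of $\log z_{l}$, while the purely $l$-independent pieces merge into the common phase and amplitude and therefore do not affect the modulus of the sum.
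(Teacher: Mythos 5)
Your proof is correct and establishes exactly the claimed lower bound and region size, but the technical route is different from the paper's. The paper works entirely in the angular coordinates $(\phi_1,\dots,\phi_n)$, multiplies $u_n$ by a single correcting phase $e^{ij\phi_1}$ (there is a sign/coordinate inconsistency in the paper's proof here --- one actually wants $e^{-ij\phi_1}$ centered at $\phi_1=\pi/2$ so that the derivative $\partial_{\phi_1}\bigl[e^{-ij\phi_1}(ix_1+P_{l,n})^j\bigr]$ picks up the factor $(1-\alpha^2_{n,l})=O(h^{2\alpha})$ and the summand has modulus $\approx 1$ at the base point), and then bounds the full gradient of this corrected sum so that Taylor's theorem propagates the value at $(1,0,\dots,0)$ over a box of the stated dimensions. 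You instead go summand by summand: you exploit the fact (not stated explicitly in the paper, but immediate from the inductive rotational construction) that each coefficient vector $\vec{\alpha}_l$ is a genuine unit vector, write each summand as $z_l=i\cos\psi+(1-d_l^2/2)\sin\psi$, observe that $\log(\cos\psi-i\sin\psi)=-i\psi$ \emph{exactly} so the $l$-independent phase is closed-form, and then show every $l$-dependent piece of $j\log z_l$ is $O(1)$ on the region $|\psi|\le ch^{1-2\alpha}$, $|\omega'|\le ch^\alpha$ because it carries a factor $d_l^2\lesssim h^{2\alpha}$ against a factor $\sin\psi\lesssim h^{1-2\alpha}$. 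This phase-alignment argument is more explicit than the paper's gradient assertion, sidesteps the coordinate-normalization subtlety entirely, and makes the borderline scaling ($j\cdot h^{2\alpha}\cdot h^{1-2\alpha}=1$, forcing a small constant $c$) transparent. One cosmetic point: the region you describe in the $(\psi,\omega')$ parametrization maps to a cone with apex at $(1,0,\dots,0)$ rather than a rectangular slab; restricting $\psi$ to the dyadic interval $[\tfrac{1}{2}ch^{1-2\alpha},ch^{1-2\alpha}]$ gives a region of the stated $h^{1-2\alpha}\times(h^{1-\alpha})^{n-1}$ shape with comparable volume, which is what the application to $\Sigma_\beta$ actually requires.
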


\begin{proof}
We prove this by expanding $\phi_{\alpha}$ about the point $(\theta_{1},\dots,\theta_{n})=(0,\pi/2,\dots{}\pi/2)$. This corresponds to the point $(1,0,\dots,0)\in\R^{n+1}$ which is fixed by all the rotations so all terms in the sum are equal to $1$ at this point. This point lies on the equator where the original harmonic is equal to $(x_{1}+ix_{2})^{j}=e^{ij\theta_{1}}$ and at $(0,\pi/2,\dots,\pi/2)$, $|e^{-ij\theta_{1}}\phi_{\alpha}|=h^{-\frac{n-1}{2}+\frac{\alpha(n-1)}{2}}.$
When $|\theta_{m}-\pi/2|\leq{}\epsilon{}h^{1/2},m\neq{}1$ the conditions on the coefficients of Proposition \ref{prop:eigen} tell us that for each term in the sum defining $\phi_{\alpha}$,
$$\left|\frac{\partial}{\partial\theta_{1}}e^{-ij\theta_{1}}(\sin(\theta_{n})\cdots{}\sin(\theta_{2})\cos(\theta_{1})+iP_{k}(\theta))^{j}\right|\leq{}jh^{2\alpha}\leq{}h^{2\alpha-1}$$
and for $m\neq{}1$
$$\left|\frac{\partial}{\partial\theta_{m}}e^{-ij\theta_{1}}(\sin(\theta_{n})\cdots{}\sin(\theta_{2})\cos(\theta_{1})+iP_{k}(\theta))^{j}\right|\leq{}jh^{\alpha}\leq{}h^{\alpha-1}.$$
So
$$\left|\frac{\partial}{\partial\theta_{1}}(e^{-ij\theta_{1}}\phi_{\alpha})\right|\leq{}h^{2\alpha-1}\cdot{}h^{-\frac{n-1}{2}+\frac{\alpha(n-1)}{2}}$$
and when $m\neq{}1$
$$\left|\frac{\partial}{\partial\theta_{m}}(e^{-ij\theta}\phi_{\alpha})\right|\leq{}h^{\alpha-1}\cdot{}h^{-\frac{n-1}{2}+\frac{\alpha(n-1}{2}}.$$
So if we take a $h^{1-2\alpha}$ in $\theta_{1}$ by $h^{1-\alpha}$ in the other $\theta_{m}$ region about $(0,\pi/2,\dots,\pi/2)$  we will still have
$$|\phi_{\alpha}|=|e^{-ij\theta_{1}}\phi_{\alpha}|>h^{-\frac{n-1}{2}+\frac{\alpha(n-1)}{2}}.$$
\end{proof}

\section{Predicting the correct scale}\label{pickexample}
In this section we discuss the heuristics of the semiclassical proof. The details of the proof can be found in \cite{koch} and \cite{tacy09} and we will not address them here. The semiclassical approach to eigenfunction estimates is to study quasimodes, that is functions such that
$$\norm{(h^{2}\Delta-1)u}_{L^{2}(M)}\lesssim{}h\norm{u}_{L^{2}(M)}$$
or more generally
$$\norm{p(x,hD)u}_{L^{2}(M)}\lesssim{}h\norm{u}_{L^{2}(M)}$$ where $p(x,hD)$ is a semiclassical pseudodifferential operator,
$$p(x,hD)u=\frac{1}{(2\pi{}h)^{n}}\int{}e^{\frac{i}{h}\langle{}x-y,\xi\rangle}p(x,\xi)u(y)d\xi{}dy$$
 whose symbol, $p(x,\xi)$ satisfies the admissibility criteria
\begin{itemize}
\item[1)] If $p(x_{0},\xi_{0})=0$ then $\nabla_{\xi}p(x_{0},\xi_{0})\neq{}0$
\item[2)] The characteristic set $\{\xi\mid{}p(x_{0},\xi)=0\}$ has positive definite second fundamental form.
\end{itemize}
For Laplacian eigenfunctions the semiclassical symbol $p(x,\xi)=|\xi||_{g}^{2}-1$ so clearly this is admissible. In fact in the flat case the characteristic set is the $n-1$ sphere (the canonical example of a hypersurface with positive definite second fundamental form). Quasimodes, as distinct from eigenfunctions, have the nice property that they remain quasimodes under localisation so we may work locally. It is relatively easy to show that contributions localised away from the characteristic set are small. Therefore we may work locally around some point $(x_{0},\xi_{0})$ such that $p(x_{0},\xi_{0})=0$. To prove $L^{p}$ estimates we perform the following steps

\begin{itemize}
\item[Step 1)] Factorise the symbol. Since the characteristic set is non-degenerate (by admissibility condition 1) we can always find some $\xi_{i}$ such that
$$|\partial_{\xi_{1}}p(x_{0},\xi_{0})|>c>0$$
so by the implicit function theorem, locally
$$p(x,\xi)=e(x,\xi)(\xi_{i}-a(x,\xi'))$$
where $|e(x,\xi)|>c>0$. The semiclassical calculus then tells us we may invert $e(x,hD)$ to obtain
$$(hD_{x_{i}}-a(x,hD_{x'}))u=hf$$
where $\norm{f}_{L^{2}(M)}\lesssim{}\norm{u}_{L^{2}(M)}$.
\item[Step 2)]
By setting $x_{i}=t$ we find that $u$ is an approximate solution to the semiclassical evolution equation
$$(hD_{t}-a(t,x',hD_{x'}))v(t,x)=0.$$
Therefore by Duhammel's principle we may write
$$u=U(t,0)u(0,x')+\int_{0}^{t}U(t-\tau,\tau)f(\tau)d\tau$$
where $U(t,\tau)$ satisfies
$$\begin{cases}
(hD_{t}-a_{1}(\tau+t,x',hD_{x'}))U_{h}(t,\tau)=0,\\
U_{h}(0,\tau)=\Id.\end{cases}$$
The problem then reduces to finding (uniform in $\tau$) $L^{2}\to{}L^{p}$ mapping norms of $U(t,\tau)$ or the restriction of $U(t,\tau)$ to a submanifold.
\item[Step 3)]
We estimate the $L^{2}(M)\to{}L^{p}(X)$ norms through a $TT^{\star}$ method. The key point is to obtain estimates of the form
$$\norm{U(t,\tau)U(s,\tau)^{\star}}_{L^{1}(X)\to{}L^{\infty}(X)}\lesssim{}h^{-\kappa_{\infty}}(h+|t-s|)^{-\gamma_{\infty}}$$
and
$$\norm{U(t,\tau)U(s,\tau)^{\star}}_{L^{2}(X)\to{}L^{2}(X)}\lesssim{}h^{-\kappa_{2}}(h+|t-s|)^{-\gamma_{2}}.$$
All other estimate follow by interpolation with these and resolving the $t-s$ integral with either Young's inequality or Hardy-Littlewood-Sobolev. It is here we see the connection with Keel-Tao \cite{keel} abstract Strichartz estimates which can be proved in the same fashion.
\end{itemize}

For submanifold estimates there is an additional question of whether this special direction, $x_{i}=t$, lies along the submanifold or not. It turns out we may assume that it does as this case gives all sharp estimates. That is if $\Sigma=\{(y,z)\in{}M\mid{}z=0\}$ we may assume that $\xi_{i}$ is dual to $y_{1}$. 

The interpolation argument of Step 3 gives an estimate of the form
$$\norm{U(t)U(s)^{\star}}_{L^{p'}\to{}L^{p}}\lesssim{}h^{-\kappa_{p}}(h+|t-s|)^{-\gamma_{p}}.$$
We can think of this as a decay estimate for propagation time $|t-s|$. To generate sharp examples we then need to find what scale of $|t-s|$ makes the largest contribution to the estimate. The sharp example will then be the $T_{\alpha}$ whose long direction is equal to this critical scale $|t-s|_{c}$. That is $|t-s|_{c}=h^{1-2\alpha}.$

Therefore the regime changes in the $L^{p}$ estimates depend only on the power $\gamma_{p}$ the numerology of which depends only the $L^{1}(X)\to{}L^{\infty}(X)$ estimates and the $L^{2}(X)\to{}L^{2}(X)$ estimates. To resolve the $t-s$ integral we estimate
$$\int{}(h+|\tau|)^{-\frac{\gamma_{p}p}{2}}d\tau.$$
\begin{itemize}
\item If $\frac{\gamma_{p}p}{2}>1$ the major contribution comes from the smallest possible $\tau=\tau_{min}$ 
\item If $\frac{\gamma_{p}p}{2}<1$ the major contribution comes from the largest possible $\tau_{max}$ 
\end{itemize}
In both cases we expect the sharp examples to be given by $T_{\alpha_{min}}$ and $T_{\alpha_{max}}$ where
$$\tau_{min}=h^{1-\alpha_{min}}\quad{}\tau_{max}=h^{1-\alpha_{max}}.$$
Independent of $X$ we can obtain a $L^{1}(X)\to{}L^{\infty}(X)$ estimate of
\begin{equation}\norm{U(t,\tau)U(s,\tau)^{\star}}_{L^{1}(X)\to{}L^{\infty}(X)}\lesssim{}h^{-\frac{n-1}{2}}(h+|t-s|)^{-\frac{n-1}{2}}\label{L1Linf}\end{equation}
so the key point is to obtain the $L^{2}(X)\to{}L^{2}(X)$ estimates. In \cite{tacy09} we see that these are given by the $L^{2}(X)\to{}L^{2}(X)$ mapping norms of an operator
$$W(t-s)u=\int{}W(x,y,t-s)u(y)dy,$$
$$W(x,y,t-s)=h^{-\frac{n-1}{2}}(h+|t-s|)^{-\frac{n-1}{2}}e^{\frac{i}{h}\phi(x,y,t-s)}b(t,s,x,y)$$
where the factor 
$$e^{\frac{i}{h}\phi(x,y,t-s)}$$
oscillates with frequency $h^{-1}|t-s|^{-1}$. From considerations of almost orthogonality we expect that the $L^{2}(X)\to{}L^{2}(X)$ mapping norm of such an operator should be determined by the mapping norm on $h^{1/2}|t-s|^{1/2}$ boxes. This suggests a general heuristic for finding those $p$ at which the behaviour of the $L^{2}(M)\to{}L^{p}(X)$ estimates change.
\begin{enumerate}
\item Calculate the $L^{2}(X)\to{}L^{2}(X)$ mapping norm of $U(t,\tau)U(s,\tau)^{\star}$ on the intersection of a $h^{1/2}|t-s|^{1/2}$ box with $X$. 
\item Interpolate that result with the $L^{1}(X)\to{}L^{\infty}(X)$ estimate given by \eqref{L1Linf}. This will give $\gamma_{p}$ for all p.
\item Find the values of $p$ for which $\frac{\gamma_{p}p}{2}=1$. We expect regime changes at these $p$. 
\item Determine $\tau_{min}$ and $\tau_{max}$ for each critical $p$. The functions $T_{\alpha_{min}}$ and $T_{\alpha_{max}}$ are expected to give sharp examples.
\end{enumerate}

\subsection{Whole and submanifold estimates}
We apply the heuristic an consider the $L^{2}(X)\to{}L^{2}(X)$ norm on the intersection between $X$ and a $h^{1/2}|t-s|^{1/2}$ box. We obtain, for $X$ a $k$ dimensional submanifold.
$$\norm{W(t-s)}_{L^{2}(X)\to{}L^{2}(X)}\lesssim{}h^{-\frac{n-1}{2}}(h+|t-s|)^{-\frac{n-1}{2}}(h^{\frac{1}{2}}|t-s|^{1/2})^{k-1}=h^{-\frac{n-k}{2}}(h+|t-s|)^{-\frac{n-k}{2}}$$
From the interpolation numerology we obtain that for whole manifolds and hypersurfaces there is only one $p$ so that $\frac{\gamma_{p}p}{2}=1$ ($p=\frac{2(n+1)}{n-1}$ and $p=\frac{2n}{n-1}$ respectively). For a lower dimensional submanifold $\frac{\gamma_{p}p}{2}\geq{}1$ for all $p\geq{}2$. Since we truncate at $|t-s|\leq{}h$ the smallest effective scale is $\tau_{min}=h$ and since we are dealing with compact sets $\tau_{max}=1$. Therefore our sharp examples will come from $T_{0}$ and $T_{1/2}$ for the whole manifold and hypersurface case and from $T_{0}$ alone for the lower submanifolds. 

\subsection{$\Sigma_{\beta}$ estimates}
By considering $\Sigma_{\beta}$ we introduce a new scale (namely $h^{\beta}$) into the problem. If $h^{\beta}\geq{}h^{1/2}|t-s|^{1/2}$ a $h^{1/2}|t-s|^{1/2}$ box can lie fully in $\Sigma_{\beta}$ and therefore we get the $L^{2}\to{}L^{2}$ estimate
$$\norm{W(t-s)}_{L^{2}(\Sigma_{\beta})\to{}L^{2}(\Sigma_{\beta})}\lesssim{}1,\quad{}\quad|t-s|\leq{}h^{2\beta-1}.$$
Which is the same as over the whole manifold. If on the other hand $h^{\beta}\leq{}h^{1/2}|t-s|^{1/2}$, the $h^{1/2}|t-s|^{1/2}$ box does not lie fully in $\Sigma_{\beta}$ so we obtain
$$\norm{W(t-s)}_{L^{2}(\Sigma_{\beta})\to{}L^{2}(\Sigma_{\beta})}\lesssim{}h^{-\frac{n-k}{2}+\frac{\beta(n-k)}{2}}(h+|t-s|)^{-\frac{n-k}{2}},\quad{}|t-s|\geq{}h^{2\beta-1}.$$
Therefore we potentially have two points at which $\frac{\gamma_{p}p}{2}=1$.  Where $\Sigma$ is a hypersurface there are two critical points. The first arises from the $|t-s|\leq{}h^{2\beta-1}$ estimates and is at $p=\frac{2(n+1)}{n-1}$. Therefore for this critical point $\tau_{min}=h$ and $\tau_{max}=h^{2\beta-1}$. The second point arises from the $|t-s|\geq{}h^{2\beta-1}$ estimate and is at $p=\frac{2n}{n-1}$. For this critical point $\tau_{min}=h^{2\beta-1}$ and $\tau_{max}=1$. Therefore sharp behaviour should be determined by $T_{0}$, $T_{1-\beta}$ and $T_{1/2}$. For lower dimensional submanifolds we obtain a critical $p$ coming from the $|t-s|\leq{}h^{2\beta-1}$ estimate again with $\tau_{min}=h$, $\tau_{max}=h^{2\beta-1}$. However from the long time estimates we always have $\frac{\gamma_{p}p}{2}\geq{}1$, therefore we only need to examine $\tau_{min}$, in this case $h^{2\beta-1}$. Therefore sharp examples should come from $T_{0}$ and $T_{1-\beta}$ alone.

\bibliography{references}
\bibliographystyle{abbrv} 

\end{document}